\theoremstyle{plain}
\newtheorem{Thm}{Theorem}[section]
\newtheorem{Prop}[Thm]{Proposition}
\newtheorem{Cor}[Thm]{Corollary}
\newtheorem{Lem}[Thm]{Lemma}
\theoremstyle{definition}
\newtheorem{Def}[Thm]{Definition}
\newtheorem{exmp}[Thm]{Example}
\newcommand{\FG}{F[G]}
\newcommand{\FH}{F[H]}
\newcommand{\FK}{F[K]}
\renewcommand{\S}{\mathfrak{S}}
\newcommand{\T}{\mathfrak{T}}
\newcommand{\D}{\mathcal{D}}
\newcommand{\E}{\mathcal{E}}
\renewcommand{\H}{\mathcal{H}}
\newcommand{\Z}{\mathcal{Z}}
\renewcommand{\P}{\mathcal{P}}
\DeclareMathOperator{\supp}{supp}
\DeclareMathOperator{\stab}{Stab}
\DeclareMathOperator*{\Span}{Span}
\DeclareMathOperator*{\Aut}{Aut}
\title{On Schur Rings Over Infinite Groups}
\author{Nicholas Bastian, Jaden Brewer, Andrew Misseldine}
\date{\today}
\begin{document}

\maketitle

\begin{abstract}
Schur rings are a type of subring of the group ring that is spanned by a partition of the group that meets certain conditions. Past literature has exclusively focused on the finite group case. This paper extends many classic results about Schur rings to the infinite groups, including Leung-Man's classification of Schur rings over finite cyclic groups. Additionally, this classification will be extended to all torsion-free locally cyclic groups. \end{abstract}

\textbf{Keywords}:
Schur Ring, cyclic group, locally cyclic group, torsion-free abelian group, Laurent polynomial ring, association scheme\\

\textbf{MSC Classification}:
20c07, 
16s34  
20e25 
20k20 
20c05, 
05e30 

\section{Introduction}
Schur rings (or S-rings) are certain subrings of the group ring afforded by a partition of the group which satisfy certain conditions (see Definition \ref{def:sring}). They were originally developed by Schur \cite{Schur33} and Wielandt \cite{Wielandt49} in the first half of the 20th century with the original purpose of studying permutation groups as an alternative to character theory and have since been used in many other applications. Of particular importance is Schur rings' connection to algebraic combinatorics first mentioned by Klin and R. P{o}schel \cite{KlinPoschel}, after which Schur rings have been used in the study of association schemes, strongly regular graphs, and difference sets. They were especially studied in this regard in the 1980s and 1990s for the case of finite cyclic groups, culminating in the complete classification of Schur rings over finite cyclic groups by Leung and Man \cite{LeungII,LeungI}. Recent applications of Schur rings include super character theories of groups noted by Hendrickson \cite{Hendrickson} and Hadamard matrices noted by Lopez \cite{Lopez18, Lopez18a}. The representation theory of Schur rings was originally developed by Tamaschke \cite{Tamaschke69,Tamaschke70} as a purely algebraic object. Wielandt's monograph on permutation groups \cite{Wielandt64}, as well as Scott \cite{Scott}, Muzychuk and Ponomarenko \cite{Muzychuk09}, Hienze \cite{Heinze}, and Misseldine \cite{MePhD}, provide introductory coverage of Schur rings over finite groups and their applications.

Thus far, the theory and applications of Schur rings has focused exclusively on Schur rings over finite groups. Two notable exceptions are Tamaschke \cite{Tamaschke69} and Haviv and Levy \cite{Haviv18}, which introduce generalizations of Schur rings, called S-semigroups and Schur dioids, respectively. In both cases, although the new structure generalizes the notion of Schur rings and allow the use of infinite groups,\footnote{In fact, Tamaschke specifically mentions that S-semigroups as a ``structure has wider applications. It is not bound to finite groups.'' \cite{Tamaschke69}. With the proper definition of Schur ring, we refute Tamaschke's claim that Schur rings are incompatible with infinite groups.} they are not subrings of the group algebra, in general. This paper provides a definition of Schur rings which includes infinite groups and coincedes with Wieldant's original definition in the case of finite groups. From this extension to infinite groups it is hoped that applications of Schur rings can likewise be extended to infinite permutation groups, infinite graphs, super characters of infinite groups, etc. 

Section \ref{sec:schur} will extend Wielandt's notion of Schur modules and rings to infinite groups, including many important properties due to Wielandt and others. Section \ref{sec:cyclic} will provide a complete classification of Schur rings over the infinite cyclic group. The group ring over the infinite cyclic group can, of course, be identified with the ring of Laurent polynomials. Leung and Man provided a classification of Schur rings over finite cyclic groups by showing that all Schur rings fall into one of four families: partitions induced by orbits of automorphism subgroups (orbit Schur rings), partitions induced by a direct product factorization (dot product), partitions induced by cosets (wedge products), and the trivial Schur ring. For the infinite cyclic group, it will be shown that there are exactly two Schur rings over this group, namely the group ring itself and the symmetric ring (see Theorem \ref{thm:int}). As both of these Schur rings fall under the orbit Schur ring family, this result extends the classification of Schur rings to all cyclic groups. This will be primarily accomplished using an operation on Schur rings which we call freshman exponentiation, an operation known to Wielandt. Section \ref{sec:rational} generalizes Theorem \ref{thm:int} to all torsion-free, locally cyclic groups (see Theorem \ref{Thm:TFLC}).

The following notation will be used throughout. Let $G$ denote an arbitrary group 
and $F$ will denote an arbitrary field of characteristic zero. The group ring over $G$ with coefficients in $F$ will be denoted $\FG$. For $\alpha\in \FG$ we say $\alpha=\sum_{g\in G} \alpha_g g$ where $\alpha_g\in F$ and $\alpha_g=0$ for all but finitely many $g$. The group $\Z = \langle z\rangle$ will denote the infinite cyclic group, written multiplicatively, while $\mathbb{Z}$ will denote the usual ring of integers.

\section{Schur Rings}\label{sec:schur}

Let $\alpha \in \FG$. Then define $\alpha^{\ast}=\sum_{g\in G} \alpha_g g^{-1}$, which is an involution on $\FG$. For any finite subset $C\subseteq{G}$ we say that $\overline{C}=\sum_{g\in C} g$ is a \emph{simple quantity}. Additionally, the \emph{Hadamard product} (or the \emph{circle product}) is defined as $\alpha\circ \beta=\sum_{g\in G} \alpha_g\beta_g g$, which is an associative, commutative, bilinear multiplication. In particular, the set $\FG$ equipped with the Hadamard product forms a ring isomorphic to the direct sum of $|G|$-many copies of $F$. Note if $|G|< \infty$, then $\overline{G}$ is the identity of this operation, but if $|G|=\infty$ then $\circ$ has no identity. Two other important facts to note are that the simple quantities are exactly the central idempotent elements with respect to Hadamard multiplication and that $\overline{C}\circ \overline{D}=\overline{C\cap D}$. Let $\alpha^{\circ n} = \alpha\circ\ldots\circ \alpha$ be the $n$-fold Hadamard product of $\alpha$ for $n\in \mathbb{N}$. The \emph{support} of an element $\alpha\in \FG$, denoted $\supp(\alpha)$, is $\{g \mid \alpha_g\neq{0}\} \subseteq G$, which must necessarily be finite. Note that $C=\supp(\overline{C})$ and $C^* = \supp(\overline{C}^*)$.

Let $\S$ be a $F$-subspace closed under $\circ$. Let $\alpha\in \S$ be a simple quantity. We say that $\alpha$ is a \emph{primitive quantity} if $\alpha$ is not the sum of two nonzero simple quantities in $\S$. Alternatively, $\alpha$ is primitive if $\alpha\circ \beta = 0$ whenever $\beta$ is a simple quantity in $\S$ such that $\supp(\alpha)\not\subseteq \supp(\beta)$. Finally, the primitive quantities of $\S$ are exactly the primitive central idempotents when $\S$ is viewed as a subring of $(\FG,+,\circ)$. This implies that $\S$ is the $F$-span of its primitive quantities since each element has a local identity, namely its support. Let $C\in \D(\S)$, where $\D(\S)$ is the partition of the Schur ring $\S$ over $F[\mathcal{Z}]$. This also implies that if $\alpha, \beta\in \S$ where $\alpha$ is a primitive quantity, then $\alpha\circ\beta=r\alpha$ where $r\in F$.

Traditionally, Schur modules (and rings) have only been constructed over finite groups. Other than known connections to other branches of mathematics, such as algebraic combinatorics, this primarily has been due to the restriction that $\overline{C}$ is a well-defined element of $\FG$ if and only if $C$ is finite. Thus, when extending the notion of a Schur module to infinite groups, we will require that the classes $C$ of the associated partition $\P$ be finite, that is, if $C\in \P$ then $|C| < \infty$, called a partition of \emph{finite support}. Necessarily, if $G$ is an infinite group and $\P$ is a partition of finite support, then $|\P| = |G|$. The following formal definition of Schur modules, originally due to Wieldandt \cite{Wielandt49}, has accordingly been modified to allow the group $G$ to be either finite or infinite.

\begin{Def}[Schur Module]\label{def:smodule}
Let $G$ be a group and let $\S$ be an $F$-subspace of $\FG$. We say that $\S$ is a \emph{Schur module} (or an \emph{S-module}) if there exists a partition $\P$ of $G$ of finite support such that $\S=\Span_F\{\overline{C} \mid C\in \mathcal{P}\}$.  
\end{Def}

For a Schur module $\S$, we denote the associated partition $\mathcal{P}$ as $\D(\S)$. We call the sets $C\in \D(\S)$ the $\S$-\emph{classes} or the \emph{primitive sets} of $\S$. We also call $\overline{C}$ a \emph{primitive quantity}\footnote{Due to Proposition \ref{lem:had}, the two notions of primitive quantity agree for Schur modules.} if $C$ is a primitive set.

Let $\S$ be a Schur module over a group $G$ and let $C \subseteq G$. We say $C$ is an \emph{$\S$-set} of $G$ if there exists a subset $\E\subseteq \D(\S)$ (not necessarily finite) such that $C=\bigcup_{E\in \E} E$.  An $\S$-set $G$ is an \emph{$\S$-subgroup} if it is likewise a subgroup of $G$.

Many of the properties of Schur modules over finite groups remain true for the infinite case. The following propositions are due originally to Wielandt \cite{Wielandt64} and most carry over to the infinite case with no significant modification to their proofs.\footnote{For most of these results, Wielandt provided little or no proof. See \cite{MePhD} for these omitted proofs.} 

\begin{Prop}[Wielandt, Proposition 22.1]\label{lem:s-set} 
Let $G$ be a group and let $\S$ be a Schur module over $\FG$. Let $\alpha \in \S$ such that $\alpha=\sum_{g\in G} \alpha_g g$. Then $K(\alpha,c)=\{g\in G \mid \alpha_g=c\}$ is an $\S$-set for each $c\in F$, called the \textit{coefficient complex} associated to $\alpha$ with respect to $c$.
\end{Prop}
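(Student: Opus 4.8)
The plan is to exploit the fact that the coefficient function $g \mapsto \alpha_g$ is constant on each primitive set of $\S$, so that the coefficient complex $K(\alpha,c)$ is automatically a union of members of $\D(\S)$. Since $\S = \Span_F\{\overline{C} \mid C \in \D(\S)\}$, I would begin by writing $\alpha = \sum_{C \in \D(\S)} r_C\, \overline{C}$ with $r_C \in F$, where only finitely many $r_C$ are nonzero (this is forced by $\alpha$ having finite support). Because the classes $C \in \D(\S)$ are pairwise disjoint and $\overline{C} = \sum_{g \in C} g$, any $g \in G$ lies in a unique class $C$, and reading off the coefficient gives $\alpha_g = r_C$. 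In particular, $\alpha_g$ depends only on which class $g$ belongs to.

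With this observation in hand the argument is essentially immediate. Fixing $c \in F$, if $g \in K(\alpha,c)$ and $C$ is the class containing $g$, then $r_C = \alpha_g = c$; conversely every $h \in C$ satisfies $\alpha_h = r_C = c$, so $h \in K(\alpha,c)$. Hence the entire class $C$ lies in $K(\alpha,c)$, and
\[
K(\alpha,c) = \bigcup_{\substack{C \in \D(\S) \\ r_C = c}} C,
\]
which is exactly the definition of an $\S$-set, taking $\E = \{C \in \D(\S) \mid r_C = c\}$.

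The only point meriting any care is the distinction between $c \neq 0$ and $c = 0$. When $c \neq 0$ the index set $\{C \mid r_C = c\}$ is finite, so $K(\alpha,c)$ is a finite union of classes. When $c = 0$, however, all but finitely many classes satisfy $r_C = 0$ (again because $\alpha$ has finite support), so in the infinite-group setting $K(\alpha,0)$ is an \emph{infinite} union of classes. This is precisely why the definition of $\S$-set was stated to permit $\E$ to be infinite, and so no obstacle arises; the argument goes through uniformly in both cases. I do not expect any substantive difficulty here — the entire content of the proposition is the observation that membership in $\S$ forces the coefficient function to be constant on each primitive set, after which saturation with respect to $\D(\S)$ is automatic.
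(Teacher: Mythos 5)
Your proof is correct and is exactly the standard argument the paper leaves implicit (it cites Wielandt and omits the proof): expanding $\alpha$ in the basis $\{\overline{C}\}$ shows the coefficient function is constant on classes, so each $K(\alpha,c)$ is saturated with respect to $\D(\S)$. Your remark about $c=0$ yielding an infinite union of classes is precisely why the paper's definition of $\S$-set allows $\E$ to be infinite, so nothing further is needed.
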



\begin{Prop}[Wielandt, Proposition 22.2]\label{lem:sup} 
Let $\S$ be a Schur module over $\FG$. Let $\alpha=\sum_{g\in G} \alpha_g g\in \S$. Then $\supp(\alpha)$ is an $\S$-set.
\end{Prop}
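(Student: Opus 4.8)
The plan is to derive the support as a coefficient complex, leveraging Proposition~\ref{lem:s-set}. The key observation is that $\supp(\alpha)$ is precisely the set of group elements whose coefficient is nonzero, whereas the coefficient complexes $K(\alpha,c)$ partition $G$ by \emph{specific} coefficient values. So the support is the complement of a single coefficient complex, namely $\supp(\alpha) = G \setminus K(\alpha,0)$, since $g \in \supp(\alpha)$ if and only if $\alpha_g \neq 0$ if and only if $g \notin K(\alpha,0)$.

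First I would invoke Proposition~\ref{lem:s-set} to conclude that $K(\alpha,0)$ is an $\S$-set, meaning it is a union of $\S$-classes, say $K(\alpha,0) = \bigcup_{E \in \E_0} E$ for some $\E_0 \subseteq \D(\S)$. The crucial structural fact is that $\D(\S)$ is a partition of the \emph{entire} group $G$, so every $\S$-class lies either wholly inside $K(\alpha,0)$ or wholly outside it (a class cannot straddle two distinct coefficient complexes, since $\alpha$ assigns a single value to each element and the coefficient function is constant on $\S$-classes by Proposition~\ref{lem:s-set}). Thus I would take $\E = \D(\S) \setminus \E_0$ to be the collection of all $\S$-classes \emph{not} contained in $K(\alpha,0)$, and argue that $\supp(\alpha) = \bigcup_{E \in \E} E$, exhibiting the support directly as an $\S$-set.

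To make this rigorous I would verify both inclusions. If $g \in \supp(\alpha)$, then $\alpha_g \neq 0$, so $g \notin K(\alpha,0)$; letting $C$ be the unique $\S$-class containing $g$, the class $C$ cannot lie in $\E_0$ (else $g \in K(\alpha,0)$), so $C \in \E$ and hence $g \in \bigcup_{E \in \E} E$. Conversely, if $g$ lies in some $E \in \E$, then since $E \not\subseteq K(\alpha,0)$ there is some $h \in E$ with $\alpha_h \neq 0$; but because coefficients are constant on $\S$-classes (each $\S$-class is a coefficient complex of $\alpha$ by the previous proposition, or directly since $\overline{E} \in \S$ forces the restriction of $\alpha$ to $E$ to be uniform), we get $\alpha_g = \alpha_h \neq 0$, so $g \in \supp(\alpha)$.

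The main subtlety, and the step I expect to require the most care, is justifying that the coefficient function $g \mapsto \alpha_g$ is genuinely constant on each $\S$-class, which is what allows a class to belong unambiguously to exactly one coefficient complex. In the finite setting this is immediate since $\alpha \in \S$ expands as an $F$-combination of the primitive quantities $\overline{C}$ and each $\overline{C}$ contributes the same coefficient to every element of $C$; the same reasoning transfers to the infinite case because $\alpha$ has finite support and so is a finite $F$-linear combination of primitive quantities, each of which is constant on its class. Once this is in hand, the complement argument above closes the proof with essentially no further computation.
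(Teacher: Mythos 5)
Your argument is correct. The paper itself prints no proof of this proposition (it defers to Wielandt and to \cite{MePhD}), so the natural comparison is with the standard one-line argument: since $\alpha$ has only finitely many distinct nonzero coefficients $c_1,\dots,c_n$, one writes $\supp(\alpha)=\bigcup_{i=1}^{n}K(\alpha,c_i)$ and concludes immediately from Proposition~\ref{lem:s-set} that this is a finite union of $\S$-sets, hence an $\S$-set. You instead go through the complement, $\supp(\alpha)=G\setminus K(\alpha,0)$, which forces you to add the (true, but extra) observation that the complement of an $\S$-set is an $\S$-set because $\D(\S)$ partitions $G$ and the coefficient function of $\alpha$ is constant on each class. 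You justify that constancy correctly --- $\alpha$ is a finite $F$-linear combination of primitive quantities since elements of $\FG$ have finite support --- so the proof closes, and as a side benefit your route makes explicit that complements of $\S$-sets are $\S$-sets, a fact worth knowing in the infinite setting where $K(\alpha,0)$ is an infinite (cofinite) $\S$-set. Still, the direct union over the nonzero coefficient complexes is shorter and avoids handling any infinite set at all; if you want the leanest write-up, that is the version to give.
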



\begin{Prop}[Wielandt, Proposition 22.3]\label{lem:wielandt} 
Let $\S$ be a Schur module over a group $G$ and let $\alpha=\sum_{g\in{G}} \alpha_g g$. Let $f : F \to F$ be a function such that $f(0)=0$. We define $f[\alpha] = \sum _{g\in{G}} f(\alpha_g) g$. Then $f[\alpha]\in \S$. 
\end{Prop}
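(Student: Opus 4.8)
The plan is to exploit the finiteness of $\supp(\alpha)$ together with the coefficient complexes furnished by Proposition \ref{lem:s-set}. Since $\supp(\alpha)$ is finite, the coefficients $\alpha_g$ assume only finitely many distinct nonzero values, say $c_1, \ldots, c_n \in F \setminus \{0\}$. For each such value $c_i$ I would consider the coefficient complex $K(\alpha, c_i) = \{g \in G \mid \alpha_g = c_i\}$. By Proposition \ref{lem:s-set} this is an $\S$-set, and because $c_i \neq 0$ it is contained in $\supp(\alpha)$ and hence finite.

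The next step is to record that a finite $\S$-set $C$ satisfies $\overline{C} \in \S$. Indeed, writing $C = \bigcup_{E \in \E} E$ as a union of $\S$-classes, the finiteness of $C$ forces $\E$ to be finite, so that $\overline{C} = \sum_{E \in \E} \overline{E}$ is a finite $F$-linear combination of primitive quantities, each lying in $\S$ by definition of a Schur module. In particular $\overline{K(\alpha, c_i)} \in \S$ for every $i$.

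Finally, I would assemble $f[\alpha]$ from these pieces. Because $f(0) = 0$, every group element outside $\supp(\alpha)$ contributes nothing, while each $g$ with $\alpha_g = c_i$ contributes the coefficient $f(c_i)$. Thus
\[
f[\alpha] = \sum_{i=1}^{n} f(c_i)\, \overline{K(\alpha, c_i)}.
\]
Since each $\overline{K(\alpha, c_i)} \in \S$, each $f(c_i) \in F$, and $\S$ is an $F$-subspace, I conclude $f[\alpha] \in \S$.

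The argument is essentially identical to the finite-group case, so the only point requiring care in the infinite setting is ensuring that the relevant coefficient complexes are finite, so that their simple quantities are well-defined elements of $\FG$ and genuinely lie in $\S$. This is precisely where the hypothesis $f(0) = 0$ does its work: it allows us to discard the possibly infinite complex $K(\alpha, 0)$, leaving only the finitely many finite complexes sitting inside $\supp(\alpha)$. I expect no serious obstacle beyond making this finiteness bookkeeping explicit.
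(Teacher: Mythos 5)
Your proof is correct and follows exactly the route the paper intends: decompose $f[\alpha]$ over the finitely many finite coefficient complexes $K(\alpha,c_i)$ with $c_i\neq 0$ supplied by Proposition \ref{lem:s-set}, and use $f(0)=0$ to discard the possibly infinite complex $K(\alpha,0)$ — which is precisely why the paper remarks that this hypothesis can be dropped when $|G|<\infty$. The paper omits the proof (deferring to Wielandt and \cite{MePhD}), and your finiteness bookkeeping is the only adaptation needed for the infinite case.
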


When $|G|< \infty$, the requirement that $0$ be a fixed point of $f$ may be removed.

\begin{Prop}[Wielandt, Proposition 22.4]\label{lem:had} 
Schur modules are closed under the Hadamard product.
\end{Prop}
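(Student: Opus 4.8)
The plan is to reduce the claim to the behaviour of the Hadamard product on primitive quantities and then exploit the fact that $\D(\S)$ is a partition. Fix a Schur module $\S$ with partition $\D(\S)$ and take arbitrary $\alpha, \beta \in \S$. First I would recall that, by the definition of $\Span_F$, every element of $\S$ is a \emph{finite} $F$-linear combination of primitive quantities $\overline{C}$ with $C \in \D(\S)$. This is the only point at which one must be slightly careful in the infinite setting: even though $\D(\S)$ may be infinite, spans consist of finite combinations, so I may write $\alpha = \sum_{i} a_i \overline{C_i}$ and $\beta = \sum_{j} b_j \overline{D_j}$ as finite sums with $C_i, D_j \in \D(\S)$ and $a_i, b_j \in F$.

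Next, using the bilinearity of $\circ$ together with the identity $\overline{C} \circ \overline{D} = \overline{C \cap D}$ recorded earlier, I would expand
\[
\alpha \circ \beta \;=\; \sum_{i,j} a_i b_j\, \bigl(\overline{C_i} \circ \overline{D_j}\bigr) \;=\; \sum_{i,j} a_i b_j\, \overline{C_i \cap D_j}.
\]
The key structural input is that $\D(\S)$ is a partition, hence its classes are pairwise disjoint: for each pair $(i,j)$ either $C_i = D_j$, in which case $C_i \cap D_j = C_i$, or $C_i \neq D_j$, in which case $C_i \cap D_j = \emptyset$. Consequently each term $\overline{C_i \cap D_j}$ equals either a primitive quantity $\overline{C_i} \in \S$ or the zero element $\overline{\emptyset} = 0 \in \S$.

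Since $\S$ is an $F$-subspace and $\alpha \circ \beta$ is thereby exhibited as a finite $F$-linear combination of elements of $\S$, I conclude that $\alpha \circ \beta \in \S$, which establishes closure. I do not expect a genuine obstacle here: the only step meriting attention relative to Wielandt's finite-group argument is the observation that each element of $\S$ is still a finite linear combination of primitive quantities, so that the bilinear expansion has finitely many terms. Once this is noted, the argument is purely formal and entirely insensitive to whether $G$ is finite or infinite, which is precisely why the proof carries over unchanged to the infinite case.
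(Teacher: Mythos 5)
Your proof is correct and is exactly the standard argument (the paper itself omits the proof, deferring to Wielandt and \cite{MePhD}): expand both elements as finite linear combinations of primitive quantities, use bilinearity of $\circ$ together with $\overline{C}\circ\overline{D}=\overline{C\cap D}$, and invoke disjointness of the partition classes so each cross term is either a primitive quantity or zero. Your added remark that spans consist of finite combinations is precisely the point that makes the argument go through unchanged for infinite $G$.
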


Let $f(x) = c_1x + c_2x^2 +\ldots + c_nx^n \in F[x]$. Then for $\alpha \in \S$, let $f^\circ(\alpha) = c_1\alpha + c_2\alpha^{\circ 2} + \ldots + c_n\alpha^{\circ n}$.  If $\alpha\in \S$ for any Schur module, then $f^{\circ}(\alpha)\in \S$ by Proposition \ref{lem:had}. By virtue of the Hadamard product, $f^\circ(\alpha) = \sum_{g\in G} f(\alpha_g)g = f[\alpha]$, as defined in Proposition \ref{lem:wielandt}. In particular, if $\S$ is any $F$-subspace of $\FG$ closed under $\circ$, then $f[\alpha]\in \S$ whenever $\alpha\in \S$.

Thus, Schur modules are subrings of $\FG$ with respect to the Hadamard product. For finite groups, the converse of this statement is likewise true. A proof of this is provided by Muzychuk \cite[Lemma 1.3]{Muzychuk94}, which relies on the semisimplicity of $(\FG, +,\circ)$. As this ring fails to be semisimple when $|G|=\infty$, a modified proof is included below.

\begin{Lem}\label{lem:Smod}
If $\S$ is an $F$-subspace of $\FG$ closed under $\circ$, then $\overline{K(\alpha,c)}\in \S$ for all $\alpha\in \S$ and $c\in F\setminus \{0\}$.
\end{Lem}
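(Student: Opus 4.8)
The plan is to realize the simple quantity $\overline{K(\alpha,c)}$ as $f[\alpha]$ for a suitably chosen polynomial $f$ with zero constant term, and then to invoke the observation immediately preceding this lemma that $f[\alpha] = f^\circ(\alpha) \in \S$ whenever $\S$ is an $F$-subspace closed under $\circ$ and $f$ has no constant term. Since $f[\alpha] = \sum_{g \in G} f(\alpha_g)\, g$, the element $f[\alpha]$ equals $\overline{K(\alpha,c)} = \sum_{g \in K(\alpha,c)} g$ precisely when $f(c) = 1$ while $f(d) = 0$ for every other coefficient value $d$ occurring in $\alpha$ (including $d = 0$). So it suffices to produce such an interpolating polynomial with $f(0) = 0$.

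The structural fact that makes this feasible --- and that stands in for the semisimplicity argument available only in the finite case --- is that $\alpha$ has finite support, so the set $\{\alpha_g \mid g \in G\}$ of its coefficients is finite. First I would dispose of the trivial case: if $c$ is not among the coefficients of $\alpha$, then $K(\alpha,c) = \emptyset$ and $\overline{K(\alpha,c)} = 0 \in \S$. Otherwise, let $a_1, \ldots, a_m$ enumerate the distinct nonzero coefficient values of $\alpha$ other than $c$, and set
$$f(x) = \frac{x}{c} \prod_{j=1}^{m} \frac{x - a_j}{c - a_j}.$$
By construction $f(0) = 0$, $f(c) = 1$, and $f(a_j) = 0$ for each $j$, so $f(\alpha_g) = 1$ exactly when $\alpha_g = c$ and $f(\alpha_g) = 0$ otherwise. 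Hence $f[\alpha] = \overline{K(\alpha,c)}$, and since $f$ has zero constant term the cited remark gives $\overline{K(\alpha,c)} = f^\circ(\alpha) \in \S$.

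I do not anticipate a genuine obstacle; once one observes that $\alpha$, having finite support, takes only finitely many coefficient values, the problem reduces to a finite Lagrange interpolation. The only point demanding care is that the interpolating polynomial must vanish at $0$, so that it lies in $\Span_F\{x, x^2, \ldots\}$ and the identity $f[\alpha] = f^\circ(\alpha)$ applies; the explicit factor of $x$ in the numerator guarantees this. It bears emphasizing that, in contrast to Muzychuk's finite-group proof, this argument never uses that $(\FG, +, \circ)$ is semisimple --- a property which fails when $|G| = \infty$ --- and instead depends solely on the finiteness of $\supp(\alpha)$.
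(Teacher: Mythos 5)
Your proof is correct, and it reaches the conclusion by a more direct route than the paper. Both arguments hinge on the same mechanism --- applying a polynomial with zero constant term to the coefficients of $\alpha$ via the Hadamard product, using the remark that $f[\alpha]=f^\circ(\alpha)\in\S$ for any $\circ$-closed subspace --- but the paper organizes this as an induction on the number of distinct nonzero coefficients: at each step it chooses an interpolant fixing all coefficients except one (which it sends to $0$), invokes the inductive hypothesis to extract the surviving coefficient complexes, and finally recovers $\overline{K(\alpha,c_1)}$ by subtracting $\sum_{i\ge 2} c_i\overline{K(\alpha,c_i)}$ from $\alpha$ and dividing by $c_1$. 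You instead write down a single Lagrange-type interpolant $f(x)=\frac{x}{c}\prod_j\frac{x-a_j}{c-a_j}$ that sends $c\mapsto 1$ and every other coefficient value (including $0$) to $0$, so that $f[\alpha]=\overline{K(\alpha,c)}$ in one step. Your version eliminates the induction entirely and is cleaner; the only points requiring care --- that $f(0)=0$ so the identity $f[\alpha]=f^\circ(\alpha)$ applies, that $c\ne 0$ and $c\ne a_j$ so the formula makes sense, and that the empty case $K(\alpha,c)=\emptyset$ gives $0\in\S$ --- are all addressed explicitly. Both proofs ultimately rest on the same structural fact you highlight, namely that $\supp(\alpha)$ is finite and hence only finitely many coefficient values occur, which is what replaces the semisimplicity used in Muzychuk's finite-group argument.
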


\begin{proof}
We proceed by induction on $n$ the number of non-zero distinct coefficients of $\alpha$. If $n=1$, then $\alpha = c\overline{K}$ for some coefficient $c\in F$ and finite subset $K \subseteq G$. Then $\overline{K} = \frac{1}{c}\alpha\in \S$. 

If $\overline{K(\alpha,c)}\in \S$ whenever $\alpha$ has fewer than $n$ nonzero coefficients, then consider when $\alpha$ has exactly $n$ nonzero coefficients, say $c_1,c_2,\ldots,c_n$. Let $f(x) \in F[x]$ be the unique interpolant polynomial of degree $n$ such that $f(0) = f(c_1) =0$ and $f(c_i) = c_i$ for $1< i\le n$. This polynomial, of course, exists as it is the unique solution to an $F$-linear system involving the Vandemonde matrix associated to the coefficients $c_1, \ldots, c_n$. Then $f[\alpha]\in \S$, which is an element with $n-1$ nonzero coefficients. Then by the hypothesis $\overline{K(f[\alpha],c)}\in \S$ for all $c\in F\setminus \{0\}$. As each $c_i\ (i\neq 1)$ was mapped to itself by $f$, these coefficient complexes $\overline{K(f[\alpha],c_i)}$ must be the same complexes as they were for $\alpha$. Thus for all $c\in F\setminus\{0,c_1\}$ we have that $\overline{K(\alpha, c)} = \overline{K(f[\alpha],c)}\in \S$. Then $\overline{K(\alpha, c_1)} = \dfrac{1}{c_1}\left(\alpha - \sum_{i=2}^n c_i\overline{K(\alpha, c_i)}\right)\in \S$. Hence, for all $c\in F\setminus\{0\}$ we have that $\overline{K(\alpha,c)}\in \S$.
\end{proof} 

\begin{Thm}\label{thm:Smod} Let $G$ be a group and let $\S$ be a subspace of $\FG$. Then $\S$ is a Schur module if and only if $\S$ is closed under $\circ$ and for all $g\in G$ there exists some $\alpha\in \S$ such that $g\in \supp(\alpha)$.
\end{Thm}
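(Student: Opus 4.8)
The plan is to treat the two directions separately, with the forward implication being essentially immediate and the reverse carrying all the content. For the forward direction, suppose $\S$ is a Schur module with partition $\P = \D(\S)$. Closure under $\circ$ is exactly Proposition \ref{lem:had}. For the support condition, every $g\in G$ lies in some class $C\in\P$ (as $\P$ partitions $G$), and then $\overline{C}\in\S$ satisfies $g\in\supp(\overline{C})$. So the nontrivial work is the converse.

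For the converse, assume $\S$ is a $\circ$-closed subspace and that every $g\in G$ lies in the support of some element of $\S$. The engine is Lemma \ref{lem:Smod}: for each $\alpha\in\S$ and $c\in F\setminus\{0\}$, the coefficient complex $\overline{K(\alpha,c)}\in\S$. I would first record that the collection $\mathcal{C} = \{C\subseteq G : C \text{ finite},\ \overline{C}\in\S\}$ is closed under finite intersections and unions, via the identities $\overline{C}\circ\overline{D} = \overline{C\cap D}$ and $\overline{C\cup D} = \overline{C}+\overline{D}-\overline{C\cap D}$ together with the fact that $\S$ is a $\circ$-closed subspace; the same reasoning gives $\overline{C\setminus D}=\overline{C}-\overline{C\cap D}$, so $\mathcal{C}$ is closed under relative complement, which will be the key tool. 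The hypothesis combined with Lemma \ref{lem:Smod} (taking $c=\alpha_g$) guarantees that each $g\in G$ lies in some member of $\mathcal{C}$.

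Next I would build the partition. For each $g$, set $P_g = \bigcap\{C\in\mathcal{C} : g\in C\}$. Since some member of $\mathcal{C}$ containing $g$ is finite, $P_g$ is contained in a fixed finite set, so this intersection reduces to a finite one and $\overline{P_g}\in\S$; thus $P_g$ is the smallest member of $\mathcal{C}$ containing $g$. The key step, and the one I expect to be the main obstacle, is showing distinct $P_g$ are disjoint, so that $\P := \{P_g : g\in G\}$ is genuinely a partition. The argument: if $h\in P_g$ then $P_h\subseteq P_g$ by minimality; if $P_h\neq P_g$, then $g\notin P_h$ (otherwise minimality would force $P_g\subseteq P_h$ and hence equality), so $g\in P_g\setminus P_h\in\mathcal{C}$, whence minimality of $P_g$ gives $P_g\subseteq P_g\setminus P_h$, forcing $P_g\cap P_h=\emptyset$, which contradicts $h\in P_g\cap P_h$. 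Hence $P_h=P_g$, and the $P_g$ form a partition of finite support.

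Finally I would verify $\S = \Span_F\{\overline{C} : C\in\P\}$. One inclusion is clear since each $\overline{P_g}\in\S$. For the reverse, given $\alpha\in\S$, Lemma \ref{lem:Smod} lets me write $\alpha = \sum_c c\,\overline{K(\alpha,c)}$ as a finite $F$-combination of simple quantities lying in $\mathcal{C}$, so it suffices to show each $\overline{K}$ with $K\in\mathcal{C}$ lies in the span of the $\overline{P_g}$. But for $g\in K$ minimality yields $P_g\subseteq K$, so $K$ is the disjoint union of the finitely many distinct classes $P_g$ it contains, giving $\overline{K} = \sum \overline{P_g}$ over those classes. Therefore $\S$ equals the span of the primitive quantities of $\P$, and $\S$ is a Schur module.
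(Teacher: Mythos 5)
Your proof is correct and follows essentially the same strategy as the paper: both directions hinge on Lemma \ref{lem:Smod}, and the partition you construct (the minimal sets $P_g$ with $\overline{P_g}\in\S$) is exactly the paper's partition by supports of primitive quantities. The only differences are cosmetic --- you reach each minimal class by intersecting all members of $\mathcal{C}$ containing $g$ (using closure under $\cap$, $\cup$, and $\setminus$), whereas the paper descends recursively through strictly smaller simple quantities, and you spell out the final spanning step that the paper leaves implicit.
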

\begin{proof}
Sufficiency is immediate from Proposition \ref{lem:had} and the existence of the partition $\D(\S)$. 

For necessity, let $\D$ be the set of supports of all primitive quantities of $\S$. We claim that $\D$ is a partition of $G$. Let $g\in G$. By assumption, there exists some $\alpha\in \S$ such that $g\in \supp(\alpha)$. By Lemma \ref{lem:Smod}, we may assume that $\alpha$ is simple. If $\alpha$ is imprimitive, then there exists some simple quantity $\beta$ such that $\alpha\circ \beta \neq 0$. If $g\in \supp(\beta)$, then $g\in \supp(\alpha\circ\beta)$. If $g\notin \supp(\beta)$, then $g\in \supp(\alpha-\alpha\circ\beta)$. In either case, $g$ is contained in a simple quantity whose support is strictly smaller than the support of $\alpha$. Repeating this process recursively, we see that exists some primitive quantity containing $g$ in its support. Suppose next that $\alpha, \beta\in \D$ such that $g\in \supp(\alpha)\cap \supp(\beta)$. But $\supp(\alpha)\cap\supp(\beta) = \supp(\alpha\circ\beta)$. If $\alpha\neq \beta$, then $\supp(\alpha\circ\beta) = \emptyset$, a contradiction. Therefore, $\D$ is a partition of $G$. As $\D$ consists of all the supports of the primitive quantities of $\S$, $\S = \Span_F\{\overline{C} \mid C\in \D\}$.
\end{proof}

When $G$ is finite, the element $\overline{G}$ serves as the supporting element for all $g \in G$. This allows Schur modules over finite groups to be viewed as $\Omega$-algebras in the sense of Universal Algebra, which allows for many universal algebraic results such as the intersection of two Schur modules is a Schur modules. For Schur modules over general groups, the absence of $\overline{G}$ can make these arguments fail, as illustrated in the following example. In particular, Schur modules of arbitrary groups do not form $\Omega$-algebra.

\begin{exmp} Consider the two Schur modules $\S$ and $\T$ over $\Z$ associated to the partitions
$\D(\S)=\{\{z^{2k},z^{2k+1}\}\mid k\in \mathbb{Z}\}$ and $\D(\T)=\{\{z^{2k},z^{2k-1}\}\mid k\in \mathbb{Z}\}$. Then $\S\cap \T = 0$, which is not a Schur module over $\Z$. Similarly, the common coarsening of two partitions is $\D(\S)\wedge\D(\T) = \{\Z\}$, which is not a partition of finite support.
\end{exmp}

Now that we have seen Schur modules over possibly infinite groups, we are ready to begin looking at Schur rings. This definition is do to Wielandt \cite{Wielandt49} and has been properly extending so that it can be for a finite or infinite group.

\begin{Def}[Schur Ring] \label{def:sring}
A \emph{Schur ring}, $\S$, over a group $G$ is a Schur module over $\FG$ such that
\begin{enumerate}[(i)]
\item $\{1\} \in \D(\S)$
\item if $C \in \D(\S)$, then $C^* \in \D(\S)$
\item\label{item:sring} for all $C, D \in \D(\S)$,
$\overline{C}\cdot\overline{D} = \sum_{E\in \D(\S)} \lambda_{CDE}\overline{E}$,
where all but finitely many $\lambda_{CDE}=0$.
\end{enumerate}
\end{Def}

The group ring $\FG$ itself forms a Schur ring over $G$ for any group, whose partition corresponds to singletons of group elements. For a finite group $G$, the partition $\{1, G\setminus 1\}$ induces a Schur ring known as the \emph{trivial Schur ring}. When $G$ is infinite, this partition is not of finite support. Thus, the trivial partition induces a Schur ring if and only if the group is finite. When $G=H\times K$ and $\S$ and $\T$ are Schur rings over $H$ and $K$, respectively, then $\S\otimes_F \T$ is a Schur ring over $G$, called the \emph{dot} (or \emph{tensor}) \emph{product} of $\S$ and $\T$. When $\H \le \Aut(G)$ is a finite automorphism subgroup, the set of elements of $\FG$ fixed by $\H$  is a Schur ring over $G$, denoted $\FG^{\H}$ and called the \emph{orbit Schur ring} associated to $\H$. When $G$ is abelian the partition $\{\{g, g^{-1}\} \mid g\in G\}$ induces a Schur ring, denoted $\FG^{\pm}$ and called the \emph{symmetric Schur ring}. This Schur ring corresponds to the orbit Schur ring associated to $\{\pm 1\}\le \Aut(G)$.
Likewise, the wedge product of Leung and Man \cite{LeungI} also applies to arbitrary groups so long as the subgroup is finite.

By taking supports, note that \eqref{item:sring} implies that $CD=\bigcup_{E\in \E} E$ for some finite subset $\E \subseteq \D(\S)$, that is, a product of primitive sets is likewise a union of primitive sets.

In light of Definition \ref{def:sring} and Theorem \ref{thm:Smod}, we extend Muzychuk's equivalence to arbitrary groups.

\begin{Cor}\label{cor:muzy} Let $G$ be a group and let $\S$ be a subring of $\FG$. Then $\S$ is a Schur ring if and only if $\S$ is closed under $\circ$ and $*$, $1\in \S$, and for all $g\in G$ there exists some $\alpha\in \S$ such that $g\in \supp(\alpha)$.
\end{Cor}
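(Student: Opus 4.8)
The plan is to read the statement as a translation of Definition~\ref{def:sring} into the language of Theorem~\ref{thm:Smod}. Since Theorem~\ref{thm:Smod} already characterizes Schur \emph{modules} as exactly those subspaces closed under $\circ$ whose elements' supports cover $G$, it suffices to match the three remaining axioms of a Schur ring---$\{1\}\in\D(\S)$, closure of $\D(\S)$ under $*$, and the structure-constant condition \eqref{item:sring}---against the three remaining hypotheses---$1\in\S$, closure under $*$, and $\S$ being a subring. I would prove both implications by verifying this dictionary entry by entry.

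For necessity, suppose $\S$ is a Schur ring. Then $\S$ is a Schur module, so it is closed under $\circ$ by Proposition~\ref{lem:had} and its primitive sets cover $G$; condition \eqref{item:sring} shows every product of primitive quantities lies in $\S$, and since these span $\S$ this makes $\S$ a subring of $\FG$. Axiom (i) gives $1=\overline{\{1\}}\in\S$, and axiom (ii) together with $\overline{C}^*=\overline{C^*}$ and the additivity of $*$ gives closure under $*$. This direction is routine.

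For sufficiency, assume the four hypotheses. Theorem~\ref{thm:Smod} immediately yields that $\S$ is a Schur module, so $\D(\S)$ is defined and $\S=\Span_F\{\overline{C}\mid C\in\D(\S)\}$. I would then check the three axioms. Axiom \eqref{item:sring} is almost free: for $C,D\in\D(\S)$ the product $\overline{C}\cdot\overline{D}$ lies in $\S$ because $\S$ is a subring, and as an element of the Schur module $\S$ it is a finite $F$-linear combination of the primitive quantities $\overline{E}$, which is exactly the required expansion. For axiom (i), I would use $1=\overline{\{1\}}\in\S$: writing $\overline{\{1\}}$ as a linear combination of primitive quantities and comparing supports---which are pairwise disjoint since $\D(\S)$ is a partition---forces a single term $\overline{\{1\}}$, so $\{1\}\in\D(\S)$.

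The main obstacle is axiom (ii), translating closure under $*$ at the level of the ring into closure of the \emph{partition} under $*$. Given $C\in\D(\S)$ I must show $C^*\in\D(\S)$, i.e.\ that $\overline{C^*}=\overline{C}^*\in\S$ is again a \emph{primitive} quantity and not merely a simple one. The clean way is to observe that $*$ is a $\circ$-automorphism of $(\FG,+,\circ)$, since $(\alpha\circ\beta)^*=\alpha^*\circ\beta^*$, and that by hypothesis it restricts to an automorphism of $\S$. Such an automorphism carries primitive quantities to primitive quantities: if $\overline{C^*}=\overline{A}+\overline{B}$ were a sum of two nonzero simple quantities of $\S$, then applying $*$ would express $\overline{C}=\overline{A^*}+\overline{B^*}$ as such a sum, contradicting the primitivity of $\overline{C}$. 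Hence $C^*\in\D(\S)$, which completes the verification.
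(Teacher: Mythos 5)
Your proof is correct and follows the route the paper intends: the paper states this corollary without proof, as immediate from Definition~\ref{def:sring} and Theorem~\ref{thm:Smod}, and your entry-by-entry verification supplies exactly the omitted details. In particular, your handling of axiom (ii) --- observing that $*$ is a $\circ$-automorphism of $\FG$ restricting to $\S$ and hence carries primitive quantities to primitive quantities --- is the right way to close the only nontrivial gap.
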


Like Schur modules, many of the properties of Schur rings remain true when $G$ is allowed to be infinite. Some of these results are included below, where new proofs are included only when the original proofs are insufficient to include infinite groups. 

\begin{Prop}[Wieldant, Proposition 23.5]\label{lem:stab}
Let $\S$ be a Schur ring over group G. Let $\alpha\in \S$ and $\stab(\alpha)= \{g\in G \mid \alpha g= \alpha\}$. then $\stab(\alpha)$ is an $\S$-subgroup of G.
\end{Prop}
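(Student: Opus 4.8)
The plan is to separate the group-theoretic content from the combinatorial content and spend essentially all the effort on the latter. That $\stab(\alpha)$ is a subgroup of $G$ is immediate: the map $\alpha\mapsto\alpha g$ is the right regular action of $G$ on $\FG$ (the action on basis elements by right multiplication, extended $F$-linearly), so $\stab(\alpha)$ is the point stabilizer of $\alpha$ under a genuine group action and is therefore closed under products and inverses and contains $1$. The entire difficulty, and the only place where the infinite case demands care, is in showing that this subgroup is an $\S$-set, i.e.\ a union of $\D(\S)$-classes.

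First I would reduce to the case of a simple quantity. Let $c_1,\dots,c_n$ be the distinct nonzero coefficients of $\alpha$. The condition $\alpha g=\alpha$ says precisely that the coefficient function $h\mapsto\alpha_h$ is invariant under right translation by $g$; since the level sets of this function are exactly the coefficient complexes $K(\alpha,c_i)$ and the values $c_i$ are distinct, this invariance is equivalent to $K(\alpha,c_i)g=K(\alpha,c_i)$ for every $i$ (the finitely many nonzero level sets determine the zero set, so no further condition is needed). Hence $\stab(\alpha)=\bigcap_{i=1}^n\stab\!\left(\overline{K(\alpha,c_i)}\right)$. By Lemma \ref{lem:Smod} each $\overline{K(\alpha,c_i)}$ lies in $\S$, and a finite intersection of $\S$-sets is again an $\S$-set (an intersection of unions of $\D(\S)$-classes is a union of $\D(\S)$-classes), so it suffices to prove the proposition when $\alpha=\overline{C}$ is a simple quantity in $\S$.

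For a simple quantity $\overline{C}\in\S$ I would exhibit $\stab(\overline{C})$ as a single coefficient complex. Consider $\overline{C}^{*}\overline{C}$, which lies in $\S$ because $\S$ is closed under $*$ and under the group-ring product. A direct count gives that the coefficient of $k$ in $\overline{C}^{*}\overline{C}$ is $|C\cap Ck^{-1}|$, a non-negative integer at most $|C|$, and this integer equals $|C|$ exactly when $Ck=C$, that is, when $k\in\stab(\overline{C})$. Because $\operatorname{char}F=0$, the integers $0,1,\dots,|C|$ have distinct images in $F$, so the coefficient of $k$ equals the field element $|C|$ precisely on $\stab(\overline{C})$; in other words $\stab(\overline{C})=K\!\left(\overline{C}^{*}\overline{C},\,|C|\right)$. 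By Proposition \ref{lem:s-set} this coefficient complex is an $\S$-set, which completes the argument.

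The step I expect to carry the real weight is the passage from the finite to the infinite setting. In Wielandt's finite argument one may pick out the \emph{maximal} coefficient of $\overline{C}^{*}\overline{C}$, or lean on $\overline{G}$, neither of which is available here: an arbitrary field of characteristic zero carries no order, and $\overline{G}\notin\FG$ when $G$ is infinite. The remedy is the observation above that the relevant coefficients are genuine non-negative integers bounded by $|C|$, so the characteristic-zero hypothesis alone singles out the value $|C|$ without any order structure. I would also note that $\stab(\alpha)$ and the coefficient complex may now be infinite, which is harmless since $\S$-sets are allowed to be infinite unions of classes, and that the reduction uses only finitely many coefficient values, so the closure of $\S$-sets under (finite) intersection indeed applies.
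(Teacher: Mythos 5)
Your proof is correct. The paper itself gives no proof of this proposition (it is one of the Wielandt results stated with proof deferred to \cite{Wielandt64} and \cite{MePhD}), so there is nothing in the text to compare against; your argument is the classical Wielandt one --- reduce to a simple quantity via the coefficient complexes $K(\alpha,c_i)$, then identify $\stab(\overline{C})$ as the coefficient complex $K(\overline{C}^{*}\overline{C},\,|C|)$ --- and you correctly supply the two adaptations the infinite, characteristic-zero setting requires, namely replacing any appeal to a maximal coefficient or to $\overline{G}$ by the observation that the coefficients of $\overline{C}^{*}\overline{C}$ are integers in $\{0,\dots,|C|\}$ embedding injectively into $F$, and noting that finite intersections of $\S$-sets remain $\S$-sets even though intersections of Schur modules need not be Schur modules.
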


\begin{Prop}[Wielandt, Proposition 23.6]\label{prop:sgroup}
Let $\S$ be a Schur ring over $G$. Let $\alpha\in \S$, and let $H=\langle \supp(\alpha) \rangle$. Then $H$ is an $\S$-subgroup of $\S$.
\end{Prop}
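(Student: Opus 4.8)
The plan is to show that $H=\langle\supp(\alpha)\rangle$ is an $\S$-set, from which the conclusion is immediate, since an $\S$-set that happens to be a subgroup is by definition an $\S$-subgroup. Write $S=\supp(\alpha)$; by Proposition \ref{lem:sup}, $S$ is an $\S$-set. The whole argument rests on a single closure principle: the collection of $\S$-sets is closed under arbitrary unions and under the complex product $CD=\{cd\mid c\in C,\ d\in D\}$. Closure under unions is immediate from the definition, since a union of unions of primitive sets is again a union of primitive sets, and the definition of $\S$-set explicitly permits the indexing family $\E\subseteq\D(\S)$ to be infinite. Closure under products is the support-level content of axiom \eqref{item:sring}: if $C=\bigcup_i E_i$ and $D=\bigcup_j F_j$ with $E_i,F_j\in\D(\S)$, then $CD=\bigcup_{i,j}E_iF_j$, and each $E_iF_j=\supp(\overline{E_i}\cdot\overline{F_j})$ is a union of finitely many primitive sets by the remark following Definition \ref{def:sring}. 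Hence $CD$ is an $\S$-set.

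With this principle in hand, I would first produce a symmetric generating $\S$-set. Since $\S$ is a Schur ring, axiom (ii) gives $\alpha^{\ast}\in\S$, so $S^{-1}=\supp(\alpha^{\ast})$ is an $\S$-set by Proposition \ref{lem:sup}; and $\{1\}$ is an $\S$-set by axiom (i). Setting $T=\{1\}\cup S\cup S^{-1}$, closure under unions shows $T$ is an $\S$-set, and $T$ is finite because $S$ is. Applying closure under products $n$ times shows that each power $T^n$ is an $\S$-set.

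Finally, because $T$ contains $1$ together with a symmetric generating set for $H$, the powers $T^n$ form an increasing chain with $H=\langle S\rangle=\bigcup_{n\ge 0}T^n$. By closure under unions, this (possibly infinite) union of $\S$-sets is again an $\S$-set. Thus $H$ is simultaneously a subgroup of $G$ and an $\S$-set, i.e., an $\S$-subgroup, as claimed.

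The step I expect to require the most care is the appeal to the infinite union. In the finite-group setting one argues that the chain $T\subseteq T^2\subseteq\cdots$ stabilizes after finitely many steps, but for infinite $H$ this chain need not stabilize. The point is that no stabilization is needed: the extended definition of $\S$-set permits an arbitrary, in particular infinite, union of primitive sets, so $H=\bigcup_n T^n$ is legitimately an $\S$-set even when it is infinite. I would flag this as the one place where the infinite-group version genuinely departs from Wielandt's original finite argument.
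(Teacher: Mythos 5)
Your proposal is correct and follows essentially the same route as the paper: the paper sets $K=\supp(\alpha)\cup\supp(\alpha)^{*}$, shows by induction (using distributivity of the complex product over unions and the support-level content of axiom (iii)) that each $K^{n}$ is an $\S$-set, and concludes that $H=\bigcup_{n\ge 0}K^{n}$ is an $\S$-subgroup via the (possibly infinite) union. Your extra care about the infinite union being permitted by the extended definition of $\S$-set is exactly the point at which the argument departs from the finite case, and the paper relies on it in the same way.
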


\begin{proof}
Let $L=\supp(\alpha)$ and $K=L\cup L^{*}$. Then $H= \langle L\rangle = \langle L, L^*\rangle = \langle K \rangle$. This implies that \[H=\bigcup_{n=0}^{\infty} K^{n}.\] Note that since $K=L\cup L^{*}$, where both $L$ and $L^*$ are unions of primitive sets, $K$ must also be a union of primitive sets of $\S$. As products of subsets distribute over unions and as products of primitive sets are unions of primitive sets, it follows by induction that $K^n$ is a union of primitive $\S$-sets for all $n$, that is, $K^n$ is an $\S$-set. As $H$ is a union of $\S$-sets, we conclude that $H$ is an $\S$-subgroup.
\end{proof}

In general, the intersection of two Schur modules (rings) over an infinite group need not be a Schur module (ring). The next proposition presents an important case when intersection provides a Schur module (ring) over a subgroup. 

\begin{Prop}\label{prop:sinter}
Let $\S$ be a Schur ring over a group $G$. Let $H$ be an $\S$-subgroup. Then $\S_H=\S\cap F[H]$ is a Schur ring over $H$.
\end{Prop}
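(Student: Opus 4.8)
The plan is to apply Corollary \ref{cor:muzy} with the ambient group taken to be $H$ rather than $G$. Since $H$ is a subgroup of $G$, the group ring $\FH$ is a subring of $\FG$, and $\S$ is a subring of $\FG$ by hypothesis; hence $\S_H = \S \cap \FH$ is a subring of $\FH$, being an intersection of subrings. It therefore suffices to check the four remaining conditions of Corollary \ref{cor:muzy}: that $\S_H$ is closed under $\circ$ and $*$, that $1 \in \S_H$, and that every element of $H$ lies in the support of some member of $\S_H$.

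The first three conditions I would dispatch quickly, as they are inherited verbatim from the closure properties of $\S$ together with elementary facts about $\FH$. Both $\S$ and $\FH$ are closed under $\circ$ (the Hadamard product of two elements supported in $H$ is again supported in $H$), so their intersection is as well. Because $H$ is a subgroup it is closed under inversion, so $\FH$ is closed under $*$; combined with condition (ii) of Definition \ref{def:sring} for $\S$, this gives closure of $\S_H$ under $*$. Finally $\{1\} \in \D(\S)$ gives $1 \in \S$, and $1 \in H$, so $1 \in \S_H$.

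The substantive step -- and the place where I expect the real content to lie -- is the support condition, which is precisely where the hypothesis that $H$ is an $\S$-\emph{subgroup} (and not merely a subgroup) gets used. Since $H$ is an $\S$-set, $H = \bigcup_{E \in \E} E$ for some $\E \subseteq \D(\S)$. Because the primitive sets of $\S$ partition $G$, any $C \in \D(\S)$ is either one of these $E$'s, whence $C \subseteq H$, or else is disjoint from $H$; consequently $\{C \in \D(\S) \mid C \subseteq H\}$ is exactly a partition of $H$, necessarily of finite support. Now given $h \in H$, let $C$ be the primitive set of $\S$ containing $h$; then $C \subseteq H$, so $\overline{C} \in \S \cap \FH = \S_H$ and $h \in \supp(\overline{C})$. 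This verifies the last hypothesis, and Corollary \ref{cor:muzy} then yields that $\S_H$ is a Schur ring over $H$, with partition $\D(\S_H) = \{C \in \D(\S) \mid C \subseteq H\}$. (Alternatively, one could bypass the corollary and directly exhibit this partition, checking the three axioms of Definition \ref{def:sring} by intersecting each structural identity for $\S$ with $\FH$; the only genuinely new ingredient in either route is the partition-of-$H$ observation above.)
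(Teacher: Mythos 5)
Your proposal is correct and follows essentially the same route as the paper's proof: both verify the hypotheses of Corollary \ref{cor:muzy} over the ambient group $H$, with the closure conditions inherited from $\S$ and $\FH$, and both use the $\S$-subgroup hypothesis to see that the primitive sets contained in $H$ partition $H$ and thereby supply the support condition. Your write-up is somewhat more explicit about why $\overline{C}\in \S_H$ for each such primitive set, but the argument is the same.
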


\begin{proof}
As $\S$ is a Schur ring over $G$ and $\FH$ is a Schur ring over $H$, both rings are closed under $\circ$ and $*$ and $1\in \S_H$ by Corollary \ref{cor:muzy}. Thus, $\S_H$ is also a subring of $\FH$ closed under these operations. As $H$ is an $\S$-subgroup, we have that $H=\bigcup_{E\in \E} E$ where $\E\subseteq \D(\S)$. In particular, $\E$ is a partition of $H$, so for all $h\in H$ there exists some $E\in \E$ such that $h\in E = \supp(\overline{E})$. Thus, $\S_H$ is a Schur ring over $H$ by Corollary \ref{cor:muzy}.
\end{proof}


\begin{Prop}\label{prop:intsgroup} Let $\S$ be a Schur ring over a group $G$ and let $H, K\le G$ be $\S$-subgroups. Then $H\cap K$ is an $\S$-subgroup and $\S_{H\cap K} = \S_H\cap \S_K$.
\end{Prop}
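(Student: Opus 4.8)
The plan is to handle the two claims separately, with the first one (that $H\cap K$ is an $\S$-subgroup) feeding into the second so that $\S_{H\cap K}$ is guaranteed to be a Schur ring via Proposition \ref{prop:sinter}.

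For the first claim, the key observation is that whenever an $\S$-subgroup is written as a union of primitive sets, every class $E\in \D(\S)$ either lies entirely inside it or is disjoint from it, since $\D(\S)$ is a partition of $G$. Writing $H=\bigcup_{E\in \E_H}E$ and $K=\bigcup_{E\in \E_K}E$ with $\E_H,\E_K\subseteq \D(\S)$, I would show $H\cap K=\bigcup_{E\in \E_H\cap \E_K}E$. For the forward inclusion, any $g\in H\cap K$ lies in a unique class $E\in \D(\S)$; since $g\in H$ and $H$ is a union of classes, $E\subseteq H$, so $E\in \E_H$, and likewise $E\in \E_K$, whence $g\in \bigcup_{E\in \E_H\cap \E_K}E$. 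The reverse inclusion is immediate. Thus $H\cap K$ is a union of primitive sets, hence an $\S$-set; being an intersection of subgroups it is also a subgroup, so it is an $\S$-subgroup.

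For the second claim, I would reduce the equality of Schur rings to an equality of group-ring subspaces. By definition $\S_H\cap \S_K=(\S\cap \FH)\cap(\S\cap \FK)=\S\cap(\FH\cap \FK)$, so it suffices to show $\FH\cap \FK=F[H\cap K]$. This is a support computation: an element $\alpha=\sum_{g\in G}\alpha_g g$ lies in $\FH$ exactly when $\supp(\alpha)\subseteq H$ and in $\FK$ exactly when $\supp(\alpha)\subseteq K$, so $\alpha\in \FH\cap \FK$ if and only if $\supp(\alpha)\subseteq H\cap K$, that is, if and only if $\alpha\in F[H\cap K]$. Substituting yields $\S_H\cap \S_K=\S\cap F[H\cap K]=\S_{H\cap K}$, where the final expression is a legitimate Schur ring over $H\cap K$ precisely because the first claim places $H\cap K$ among the $\S$-subgroups, allowing Proposition \ref{prop:sinter} to apply.

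The arguments are largely bookkeeping, so I do not expect a serious technical obstacle; the one point that needs care is the partition dichotomy used in the first claim, namely that membership of a single element of a class in $H$ forces the whole class into $H$. This is exactly the feature distinguishing $\S$-subgroups from arbitrary subgroups, and it is what would fail for $\S$-sets that are not unions of whole classes. I would also be careful to invoke Proposition \ref{prop:sinter} only after the first claim is established, so that all three objects $\S_H$, $\S_K$, and $\S_{H\cap K}$ are known to be Schur rings before asserting their equality.
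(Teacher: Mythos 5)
Your proposal is correct and follows essentially the same route as the paper: the first claim is proved by observing that the class containing any $x\in H\cap K$ must lie inside both $H$ and $K$ (the paper phrases this elementwise rather than via the index sets $\E_H\cap\E_K$, but the argument is identical), and the second claim is the same set-theoretic identity $\S\cap \FH\cap \FK=\S\cap F[H\cap K]$ that the paper dismisses as immediate. Your write-up merely supplies the support computation the paper leaves implicit; there is no substantive difference.
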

\begin{proof}
Let $x\in H\cap K$. Then there exists some class $C\in \D(\S)$ such that $x\in C$. As $H$ is an $\S$-subgroup, it happens that $C\subseteq H$. Similarly, $C\subseteq K$. Thus, $C\subseteq H\cap K$. Then $H\cap K$ is a union of primitive sets of $\S$, which shows that $H\cap K$ is an $\S$-subgroup. The equality is immediate from the definition of $\S_{H\cap K}$ and properties of intersection.
\end{proof}

For every $\alpha = \sum _{g\in{G}} \alpha_g g \in{\S}$ where $\S$ is an S-ring over a group G. For $\alpha\in \FG$, we define the \emph{freshman exponent} of $\alpha$ by $n\in \mathbb{Z}$ as $\alpha^{(n)} = \sum _{g\in{G}} \alpha_g g^n$ and was first studied by Wielandt \cite{Wielandt64} in the context of Schur rings. The following identities are clear:
\begin{multicols}{2}
\begin{enumerate}[i)]
\item $(a\alpha+b\beta)^{(m)}=a\alpha^{(m)}+b\beta^{(m)}$
\item $\alpha^{(mn)}=(\alpha^{(m)})^{(n)}$
\item $\alpha^{( -1)}=\alpha^*$
\item $\overline{C}^{(a)}=\overline{C^{(a)}}$
\end{enumerate}
\end{multicols} 
\noindent for all $\alpha, \beta\in \FG$, $a,b\in F$, $m, n\in \mathbb{Z}$, and $C\subseteq G$.

\begin{Thm}[Weilandt, Theorem 23.9]\label{thm:fresh} 
If $\S$ is a Schur ring over $\FG$ where $G$ is a torsion-free abelian group, then for all $\alpha \in \S$ and $m\in \mathbb{Z}$, we have that $\alpha^{(m)}\in \S$, that is, Schur rings over $G$ are closed under freshman exponents. \end{Thm}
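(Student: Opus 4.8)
The plan is to reduce to the case where $m$ is prime and $\alpha$ is a simple quantity, and then run a ``freshman's dream'' orbit-counting argument that recovers $\overline{C^{(p)}}$ from the residues of the coefficients of the ordinary group-ring power $\overline{C}^{\,p}$ modulo $p$.

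First the reductions. For $m=0$ we have $\alpha^{(0)}=\left(\sum_g \alpha_g\right)\cdot 1\in \S$ since $1=\overline{\{1\}}\in \S$, and $m=1$ is trivial. Using identity (ii), $\alpha^{(mn)}=(\alpha^{(m)})^{(n)}$, together with identity (iii), $\alpha^{(-1)}=\alpha^*\in \S$ (closure under $*$), every nonzero $m$ factors as $\pm 1$ times positive primes, so it suffices to show $\S$ is closed under $\alpha\mapsto \alpha^{(p)}$ for each prime $p$. Moreover, since $\alpha\mapsto\alpha^{(p)}$ is $F$-linear by identity (i), since $\S=\Span_F\{\overline{C}\mid C\in \D(\S)\}$, and since $g\mapsto g^p$ is injective on a torsion-free abelian group so that identity (iv) gives $\overline{C}^{(p)}=\overline{C^{(p)}}$, it suffices to prove $\overline{C^{(p)}}\in \S$ for every primitive set $C$. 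This reduction to simple quantities is not merely cosmetic: it guarantees that the element expanded below has integer coefficients, which is what licenses an argument modulo $p$ inside the characteristic-zero field $F$.

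The heart of the argument is to expand $\overline{C}^{\,p}\in \S$ and read off $C^{(p)}$ from the residues of its coefficients. Write $\overline{C}^{\,p}=\sum_{h\in G} N(h)\,h$, where $N(h)$ counts the ordered tuples $(g_1,\dots,g_p)\in C^p$ with $g_1\cdots g_p=h$. The cyclic group $\mathbb{Z}/p$ acts on $C^p$ by cyclic shift; since $G$ is abelian the product $g_1\cdots g_p$ is shift-invariant, so this action preserves each fiber $\{(g_i)\mid \prod_i g_i=h\}$. As $p$ is prime, every orbit has size $1$ or $p$, the size-$1$ orbits being exactly the constant tuples $(g,\dots,g)$ with $g^p=h$. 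Because $G$ is torsion-free abelian, $g\mapsto g^p$ is injective, so there is at most one such $g$; hence $N(h)\equiv 1\pmod p$ when $h\in C^{(p)}$ and $N(h)\equiv 0\pmod p$ otherwise. This is precisely where both hypotheses are used, and getting this counting exactly right is the step I expect to be the main obstacle.

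It follows that $C^{(p)}=\{\,h\mid N(h)\equiv 1\pmod p\,\}$, so $C^{(p)}$ is the disjoint union of the coefficient complexes $K(\overline{C}^{\,p},c)$ over the positive integers $c\equiv 1\pmod p$, of which only finitely many are nonempty. By Proposition \ref{lem:s-set} each $K(\overline{C}^{\,p},c)$ with $c\neq 0$ is a finite $\S$-set, hence its simple quantity lies in $\S$; summing these finitely many simple quantities yields $\overline{C^{(p)}}=\sum_{c>0,\,c\equiv 1\,(p)}\overline{K(\overline{C}^{\,p},c)}\in \S$. Combined with the reductions above, this proves $\alpha^{(m)}\in \S$ for all $\alpha\in \S$ and all $m\in \mathbb{Z}$.
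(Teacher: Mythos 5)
Your proof is correct and is essentially the argument the paper itself invokes: the paper omits a written proof of Theorem \ref{thm:fresh}, deferring to Wielandt's classical mod-$p$ ``freshman's dream'' argument and noting only that it goes through once $m$ is coprime to the orders of all elements in the support, which is automatic for torsion-free groups. Your write-up supplies exactly the details of that argument --- reduction to primes and to simple quantities, the cyclic-shift orbit count giving $N(h)\equiv 1$ or $0 \pmod{p}$ according as $h\in C^{(p)}$ or not, and extraction via coefficient complexes --- so there is nothing to correct.
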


The above theorem was proven by Wielandt \cite{Wielandt64} for finite groups. The proof of Theorem \ref{thm:fresh} is essentially the same as Wielandt's in the case that the exponent $m$ is relatively prime to the order of the group. In reality what is needed for Wieldant's argument to hold is that $m$ is relatively prime to all the orders of elements in $\supp{\alpha}$ which is the case for torsion-free groups. 

\section{Classifying Schur Rings Over the Integers}\label{sec:cyclic}

Now that we see that Schur rings over $F[\Z]$ are closed under freshman exponentiation, we can move onto proving that there are only two Schur rings over $F[\Z]$, the group ring and the symmetric Schur ring.



\begin{Lem}\label{lem:key}
Let $\S$ be a Schur ring over $\Z=\langle z\rangle$. Let $C\in \D(\S)$. If $z \in C$ then $C=\{z\}$ or $C=\{z, z^{ -1}\}$. \end{Lem}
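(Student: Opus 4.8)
The plan is to exploit closure under freshman exponents, which is available here precisely because $\Z$ is torsion-free abelian (Theorem \ref{thm:fresh}). Write $C = \{z^a \mid a \in S\}$ for a finite exponent set $S \subseteq \mathbb{Z}$ with $1 \in S$. The identity $\overline{C}^{(m)} = \overline{C^{(m)}}$ then shows that $\overline{C^{(m)}} \in \S$ for every $m \in \mathbb{Z}$, where $C^{(m)} = \{g^m \mid g \in C\} = \{z^a \mid a \in mS\}$. The goal is to prove $S \subseteq \{-1, 0, 1\}$ and then eliminate $0$, which forces $C$ to be one of the two asserted sets.

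First I would establish a dichotomy for each nonzero $m$. Since $\S$ is closed under the Hadamard product (Proposition \ref{lem:had}) and $\overline{C} \circ \overline{C^{(m)}} = \overline{C \cap C^{(m)}}$, the intersection $C \cap C^{(m)}$ is the support of an element of $\S$ lying inside $C$. Because $\overline{C}$ is primitive, a simple quantity of $\S$ supported inside $C$ must be all-or-nothing, so $\overline{C \cap C^{(m)}}$ equals either $0$ or $\overline{C}$; that is, $C \cap C^{(m)} = \emptyset$ or $C \subseteq C^{(m)}$. Here torsion-freeness pays off a second time: for $m \neq 0$ the map $a \mapsto ma$ is injective, so $|C^{(m)}| = |C|$, and an inclusion $C \subseteq C^{(m)}$ between finite sets of equal cardinality upgrades to the equality $C = C^{(m)}$, equivalently $S = mS$.

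Next I would rule out $|m| \ge 2$. If $S = mS$ with $|m| \ge 2$, choose $a \in S$ of maximal absolute value; since $1 \in S$ we have $|a| \ge 1$, yet $ma \in mS = S$ satisfies $|ma| = |m|\,|a| > |a|$, contradicting maximality. Hence $C = C^{(m)}$ is impossible whenever $|m| \ge 2$, so for such $m$ the dichotomy collapses to $C \cap C^{(m)} = \emptyset$. Now suppose toward contradiction that $z^j \in C$ for some $j$ with $|j| \ge 2$; since $z \in C$ we also have $z^j \in C^{(j)}$, whence $z^j \in C \cap C^{(j)}$, contradicting $C \cap C^{(j)} = \emptyset$. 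Therefore $S \subseteq \{-1, 0, 1\}$, and since $\{1\} \in \D(\S)$ is a distinct class we have $1 \notin C$, leaving $C \subseteq \{z, z^{-1}\}$. As $z \in C$, this yields $C = \{z\}$ or $C = \{z, z^{-1}\}$.

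I expect the main obstacle to be the correct interplay of the two uses of torsion-freeness: freshman-exponent closure is needed to manufacture the comparison sets $C^{(m)}$, while injectivity of $a \mapsto ma$ is what both promotes $C \subseteq C^{(m)}$ to equality and, through the maximal-exponent argument, forbids any genuine self-symmetry $S = mS$ with $|m| \ge 2$. The other delicate point is the primitivity step, namely that a Hadamard product of $\overline{C}$ with another element of $\S$ can only return a scalar multiple of $\overline{C}$, which is what converts the set-theoretic comparison into the clean all-or-nothing dichotomy.
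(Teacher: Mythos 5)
Your proof is correct and takes essentially the same approach as the paper: both use freshman exponentiation to form $C^{(m)}$, the Hadamard product together with primitivity of $\overline{C}$ to get the dichotomy $C\cap C^{(m)}=\emptyset$ or $C=C^{(m)}$ (upgraded by the cardinality count), and then a short integer argument to force the exponents into $\{\pm 1\}$. The only cosmetic difference is that the paper concludes directly from $z\in C^{(a)}$ that $ax=1$, whereas you rule out $S=mS$ for $|m|\ge 2$ via a maximal-absolute-value argument; these are contrapositive packagings of the same final step.
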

\begin{proof}
Let $w\in C$. Then $w=z^{a}$ for some $a\in\mathbb{Z}$. Then $w\in C\cap C^{(a)}$. As $\overline{C}\in \S$, $\overline{C}^{(a)}\in \S$ by Theorem \ref{thm:fresh}. Now, $\overline{C\cap C^{(a)}} = \overline{C}\circ \overline{C^{(a)}} = \overline{C}\circ \overline{C}^{(a)}\in \S$. Since $\overline{C}$ is a primitive quantity,  $\overline{C}\circ\overline{C}^{(a)}=r\overline{C}$ for some $r\in F$, which necessarily must be $r=1$ or $r=0$ as $\overline{C^{(a)}}$ is a simple quantity. As $C\cap C^{(a)}\neq \emptyset$, it must be the former, that is, $\overline{C}\circ\overline{C}^{(a)}=\overline{C}$. This implies that $C\subseteq C^{(a)}$. But $|C| = |C^{(a)}| < \infty$. We conclude that $C=C^{(a)}$. This means $z\in C^{(a)}$, that is, $z=z^{ax}$ for some $x\in \mathbb{Z}$. Thus, $1=ax$, which implies that $a=1$ and $x=1$, or $a=-1$ and $x=-1$. Hence, $w=z$, or $w=z^{-1}$, which proves the result.
\end{proof}

\begin{Thm}\label{thm:int} The only Schur rings over the infinite cyclic group are the group ring and the symmetric ring. \end{Thm}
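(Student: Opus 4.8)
The plan is to leverage Lemma \ref{lem:key} together with the freshman exponent closure from Theorem \ref{thm:fresh} to pin down every primitive set of an arbitrary Schur ring $\S$ over $\Z$. The key observation is that Lemma \ref{lem:key} already handles the class containing $z$ itself, so the main task is to bootstrap from this one class to control all the others. First I would let $C \in \D(\S)$ be the class containing $z$, so that by Lemma \ref{lem:key} either $C = \{z\}$ or $C = \{z, z^{-1}\}$. These two cases should correspond precisely to the two asserted Schur rings.

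Consider first the case $C = \{z\}$. I would argue that $\{z\}$ being a class forces every singleton to be a class. The cleanest route is to use that $\overline{C} = z$ is then a unit-supported primitive quantity, and to exploit multiplicative closure: since $z \in \S$ and $\S$ is a subring, powers $z^n$ lie in $\S$ for all $n$, so $\{z^n\}$ is the support of an element of $\S$ and hence (by Proposition \ref{lem:sup}) an $\S$-set. Because the $z^n$ exhaust $G$ and each is a singleton $\S$-set, every class is a singleton and $\S = \FG$. Now consider the case $C = \{z, z^{-1}\}$. Here I would show symmetrically that $\{z^n, z^{-n}\}$ is a class for every $n$: the element $\overline{C} = z + z^{-1} \in \S$ gives, via freshman exponents $\overline{C}^{(n)} = z^n + z^{-n} \in \S$ by Theorem \ref{thm:fresh}, a simple quantity supported on $\{z^n, z^{-n}\}$, which is then a union of classes; an argument analogous to Lemma \ref{lem:key} (applying the $\circ$-idempotence of primitive quantities against a freshman-exponentiated copy) should rule out the set splitting into two singletons, yielding exactly the symmetric partition and $\S = \FG^{\pm}$.

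The step I expect to be the main obstacle is making the reduction from ``the class of $z$ is $\{z\}$'' to ``every class is a singleton'' fully rigorous, and dually the argument that $\{z^n, z^{-n}\}$ does not break apart in the symmetric case. The subtlety is that knowing $z + z^{-1} \in \S$ only tells us $\{z^n, z^{-n}\}$ is an $\S$-set, i.e.\ a \emph{union} of classes, not a single class; I must exclude the possibility that $\{z^n\}$ and $\{z^{-n}\}$ are separate classes (which would smuggle in more structure) without already knowing the partition. I anticipate closing this gap by reapplying the mechanism of Lemma \ref{lem:key}: if $D$ is the class containing $z^n$, then $\overline{D} \circ \overline{D}^{(a)} = \overline{D}$ whenever $z^{na} \in D$, and chasing the resulting equation $D = D^{(a)}$ against the symmetry already forced on the class of $z$ should determine $D$ uniquely. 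Once both cases are resolved, the theorem follows immediately since these are the only two possibilities for the class of $z$ and each determines $\S$ completely.
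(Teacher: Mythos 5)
Your overall architecture matches the paper's: split on the two possibilities for the class of $z$ given by Lemma \ref{lem:key}, dispose of the case $C=\{z\}$ by multiplicative closure, and in the case $C=\{z,z^{-1}\}$ use Theorem \ref{thm:fresh} to see that each $\{z^n,z^{-n}\}$ is an $\S$-set. The first case is correct as you present it. But the step you yourself flag as the main obstacle --- ruling out that $\{z^n,z^{-n}\}$ splits into the singleton classes $\{z^n\}$ and $\{z^{-n}\}$ --- is a genuine gap, and the mechanism you propose to close it does not work. The idempotence argument of Lemma \ref{lem:key} extracts information from the equation $D=D^{(a)}$; if $D=\{z^n\}$ is a singleton, the hypothesis $z^{na}\in D$ forces $a=1$ and the equation is vacuous, while if $D=\{z^n,z^{-n}\}$ it only yields $D=D^*$, which you already know. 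There is nothing to ``chase.'' Worse, the toolkit you restrict yourself to (Hadamard products, coefficient complexes, $*$, and freshman exponents) genuinely cannot detect the contradiction: the partition consisting of $\{1\}$, the singletons $\{z^{kn}\}$ for $k\neq 0$, and the pairs $\{z^a,z^{-a}\}$ for $a\notin n\mathbb{Z}$ spans a Schur module that is closed under all of those operations. Only condition (iii) of Definition \ref{def:sring} fails for it, so any correct argument must invoke the ring multiplication.

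That is exactly the missing idea, and it is how the paper closes the case. Supposing $\{z^m\}$ is a class with $m\neq 0$, one forms $z^m(z+z^{-1})=z^{m+1}+z^{m-1}\in\S$, takes the Hadamard product with $z^{m+1}+z^{-(m+1)}\in\S$ (available by freshman exponentiation) to isolate $z^{m+1}\in\S$, and then multiplies by $z^{-m}\in\S$ to conclude $z\in\S$, contradicting the primitivity of $\{z,z^{-1}\}$. The point is the interplay between the ordinary product, which translates supports and so escapes the subgroup $\{m\mid z^m\in\S\}$ of $\mathbb{Z}$, and the Hadamard product, which intersects supports to isolate a single group element. Your proposal needs this step, or an equivalent use of the ring product, to be complete.
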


\begin{proof}
Let $\S$ be a Schur ring over $\Z$. Let $C \in \D(\S)$ be the primitive set containing $z$. Then by, Lemma \ref{lem:key}, we consider two cases: $C = \{z\}$ or $C=\{z,z^{-1}\}$. If $C=\{z\}$, then $z^m\in \S$ for all $m\in \mathbb{Z}$. Thus, $F[\Z]\subseteq \S$, forcing $\S=F[\Z]$, the group ring. 

If $C=\{z,z^{-1}\}$, then $z+z^{-1}\in \S$. Then $=z^a+z^{-a}=(z+z^{ -1})^{(a)}\in \S,\ \forall a\in \mathbb{Z}$ by Theorem \ref{thm:fresh}. That is, $\{z^a,z^{-a}\}$ is an $\S$-set. Now, by way of contradiction, assume that $\exists m\in{\mathbb{Z}}$ such that $\{z^{m},z^{-m}\}$ is imprimitive for $m\neq 0$. Then $\{z^{m}\}$ and $\{z^{-m}\}$ are primitive sets of $\S$, that is, $z^{m}, z^{-m}\in \S$. Then $z^{m}(z+z^{-1}) = z^{m+1}+z^{m-1}\in\S$. We also know that $z^{m+1}+z^{-(m+1)}\in \S$, from freshman exponentiation. Then using the Hadamard product, $(z^{m+1}+z^{m-1}) \circ (z^{m+1}+z^{-m+1)})=z^{m+1}\in{\S}$. Hence, $z^{m+1}z^{-m} = z\in\S$, a contradiction. Hence, $\{z^{a},z^{-a}\}$ is a primitive set of $\S$, for all $a\in{\mathbb{Z}}$. In this case $\S$ is the symmetric ring $F[\Z]^\pm$. 
\end{proof}

Note that because $\Z$ is infinite, the trivial Schur ring is not possible over $\Z$. Likewise, no dot product is constructible over $\Z$ since it has no nontrivial direct product factorization. Finally, as $\Z$ no nontrivial finite subgroups, no wedge product is constructible over $\Z$. Thus, of the four types of Schur rings available to finite cyclic groups by the Leung-Man Classification, orbit Schur rings are the only possible type available for $\Z$. Furthermore, as $\Aut(\Z) = \{\pm 1\}$, $\FG$ and $\FG^\pm$ are the two orbit Schur rings over $\Z$. Thus, we see that Theorem \ref{thm:int} is an extension of the Leung-Man Classification of Schur rings to all cyclic groups, both finite and infinite.

\section{Classifying Schur Rings Over the Torsion-Free Locally Cyclic Groups}\label{sec:rational}
Now that we have a classification of Schur rings over the infinite cyclic group, we are ready to move onto classifying Schur rings over all torsion-free locally cyclic groups. It is well known that a group is torsion-free locally cyclic if and only if it is isomorphic to a subgroup of the additive group of rational numbers. To do this we will first look at some results concerning $\S$-sets and $\S$-subgroups.

\begin{Thm}\label{Thm:TFLC}
Let $\S$ be a Schur ring over $G$, where $G$ is a torsion-free locally cyclic group. Then $\S$ is either the group ring $\FG$ or the symmetric Schur ring $\FG^\pm$.
\end{Thm}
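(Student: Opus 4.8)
The plan is to reduce the whole problem to the infinite cyclic case handled in Theorem \ref{thm:int}, exploiting that $G$ is locally cyclic. Since $G$ embeds in the additive group of $\mathbb{Q}$, every finitely generated subgroup of $G$ is cyclic; being torsion-free, every such subgroup is either trivial or isomorphic to $\Z$. This is the structural fact I would lean on throughout: any finite subset of $G$ generates a copy of $\Z$, and on any such copy the only available Schur rings are the two classified in Theorem \ref{thm:int}.

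First I would establish the local structure of the classes of $\S$. Fix $g \in G \setminus \{1\}$ and let $C_g \in \D(\S)$ be the class containing $g$. Setting $H = \langle C_g\rangle$, Proposition \ref{prop:sgroup} shows $H$ is an $\S$-subgroup, and since $C_g$ is finite, $H$ is finitely generated, hence $H \cong \Z$. By Proposition \ref{prop:sinter}, $\S_H = \S \cap F[H]$ is a Schur ring over $H$. Because $\overline{C_g} \in \S_H$ remains primitive in the smaller ring $\S_H$ (any splitting of it into simple quantities in $\S_H$ would also be a splitting in $\S$), we have $C_g \in \D(\S_H)$. Applying Theorem \ref{thm:int} to $\S_H$, which is either $F[H]$ or $F[H]^\pm$, and recalling that the classes of these two rings are the singletons and the symmetric pairs respectively, I conclude that $C_g = \{g\}$ or $C_g = \{g, g^{-1}\}$ (with $g \neq g^{-1}$ by torsion-freeness).

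This yields a clean dichotomy. If $C_g = \{g, g^{-1}\}$ for every $g \neq 1$, then $\D(\S)$ consists of $\{1\}$ together with all symmetric pairs, which is exactly the partition of $\FG^\pm$, and we are done. Otherwise there is some $g \neq 1$ with $\{g\} \in \D(\S)$, so in particular $g \in \S$, and I would show this forces $\S = \FG$. Given any $h \in G \setminus \{1\}$, the element $g + \overline{C_h}$ lies in $\S$ (both summands being in $\S$) and its support generates $K := \langle g, h\rangle$, so $K$ is an $\S$-subgroup by Proposition \ref{prop:sgroup} and $K \cong \Z$. Restricting via Proposition \ref{prop:sinter} and applying Theorem \ref{thm:int} once more, the presence of the singleton class $\{g\}$ inside $K$ is incompatible with $F[K]^\pm$ (whose nonidentity classes all have size two), so $\S_K = F[K]$. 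Hence $h \in \S_K \subseteq \S$, and by Proposition \ref{lem:sup} the singleton $\{h\} = \supp(h)$ is a class of $\S$. As $h$ was arbitrary, $\S = \FG$.

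The main obstacle I anticipate is precisely this last globalization step: the local structure lemma only controls one class at a time, and it is not a priori clear that a single singleton class cannot coexist with symmetric pairs elsewhere. The device of forming $g + \overline{C_h}$ to manufacture an $\S$-subgroup $\langle g, h\rangle \cong \Z$ containing both a known singleton and the element $h$ is what lets me invoke Theorem \ref{thm:int} on a group large enough to compare the two elements simultaneously, and this is where local cyclicity is used in an essential, genuinely non-local way.
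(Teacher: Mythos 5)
Your proof is correct and follows the same overall strategy as the paper's: use local cyclicity to realize each class inside an infinite cyclic $\S$-subgroup, invoke Theorem \ref{thm:int} there to obtain the singleton-or-symmetric-pair dichotomy for every class, and then rule out a mixture of the two types. Where you genuinely diverge is the globalization step. The paper compares two classes $C$ and $D$ by \emph{intersecting} the $\S$-subgroups $H=\langle C\rangle$ and $K=\langle D\rangle$: Proposition \ref{prop:intsgroup} makes $H\cap K$ an $\S$-subgroup, it is again infinite cyclic (two nontrivial subgroups of a subgroup of $\mathbb{Q}$ meet nontrivially), and a nontrivial $x\in H\cap K$ would otherwise have to be simultaneously a singleton class and half of a symmetric pair. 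You instead pass to the \emph{join}: given a singleton class $\{g\}$ and an arbitrary $h$, the element $g+\overline{C_h}$ lies in $\S$ and its support generates $\langle g,h\rangle$, which Proposition \ref{prop:sgroup} turns into a cyclic $\S$-subgroup containing both elements at once. Each route buys something: yours avoids Proposition \ref{prop:intsgroup} and the (implicit in the paper) nontriviality of the intersection, at the cost of the auxiliary element $g+\overline{C_h}$; the paper's avoids that device but leans on the intersection machinery. Both arguments are complete. The only point worth making explicit in yours is that a class of $\S$ contained in the $\S$-subgroup $K$ is automatically a class of $\S_K$ (the partition $\D(\S_K)$ is the restriction of $\D(\S)$ to $K$), which is what licenses excluding $F[K]^\pm$ from the presence of the non-identity singleton $\{g\}$; your remark that $\overline{C_g}$ stays primitive in the smaller ring is the right observation and covers this.
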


\begin{proof}
Let $\S$ be an S-ring over $G$, where $G$ is a torsion-free locally cyclic group. Let $C\in \D(\S)$ excluding $\{1\}$. As $C$ is necessarily finite, the subgroup $H=\langle C\rangle \cong \Z$ and is an $\S$-subgroup by Proposition \ref{prop:sgroup}. Then $\S_H = \FH$ or $\S_H=\FH^\pm$ by Theorem \ref{thm:int}. As $\overline{C}\in \S_H$, we see that $C=\{z^a\}$ or $C=\{z^a,z^{-a}\}$ for some $a\in \mathbb{Z}$.

Let $C,D\in \D(\S)$ excluding $\{1\}$. Consider $H=\langle C \rangle,\ K=\langle D \rangle$, which are necessarily cyclic. Like above, $\S_H=\FH$ or $\S_H=\FH^\pm$, and $\S_K =\FK$ or $\S_K= \FK^\pm$. This presents us with three cases:
\begin{enumerate}[(i)]
\item\label{item1:TFLC} $\S_H = \FH$ and $\S_K=\FK$;
\item\label{item2:TFLC} $\S_H = \FH$ and $\S_K=\FK^\pm$;\footnote{The case $\S_H = \FH^\pm$ and $\S_K=\FK$ is similar to \eqref{item2:TFLC} and is omitted.}
\item\label{item3:TFLC} $\S_H = \FH^\pm$ and $\S_K=\FK^\pm$.
\end{enumerate} 

Note that $H\cap K\cong \Z$ and $\S_{H\cap K} = \S_H\cap \S_K$, by Proposition \ref{prop:intsgroup}. As $\S_{H\cap K}$ is an $\S$-subgroup, $\S_{H\cap K}=F[H\cap K]$ or $\S_{H\cap K} = F[H\cap K]^\pm$.

In case \eqref{item1:TFLC}, $\S_{H\cap K} = \FH\cap\FK = F[H\cap K]$. Likewise, in case \eqref{item3:TFLC}, $\S_{H\cap K} = \FH^\pm\cap\FK^\pm = F[H\cap K]^\pm$, as both sets consist of all elements of $F[H\cap K]$ fixed by $\H=\{\pm 1\}$.

We next will show that case \eqref{item2:TFLC} is impossible. Let $x\in H\cap K$. As $\S_H=\FH$ and $x\in H$, it must be that $\{x\}\in \D(\S)$. As $\S_K=\FK^{\pm}$ and $x\in K$, it must be that $\{x,x^{-1}\}\in \D(\S)$, which is a contradiction. This means that $\S_{H}$ and $\S_{K}$ cannot be different types of Schur rings, that is, they are both group rings or both symmetric rings over their respective subgroups. In summary, every class in $\D(\S)$ is either a singleton or a symmetric pair and either all classes are singletons or all classes are pairs. Hence $\S$ is either the group ring over $G$ or the symmetric ring over $G$.
\end{proof}

\begin{Cor} Let $\mathcal{Q}$ be the group of rational numbers. Then the only Schur rings over $\mathcal{Q}$ are the group ring $F[\mathcal{Q}]$ and the symmetric Schur ring $F[\mathcal{Q}]^\pm$.
\end{Cor}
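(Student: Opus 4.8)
The plan is to recognize that the final corollary is an immediate special case of Theorem \ref{Thm:TFLC}, so the only real task is to verify that the group $\mathcal{Q}$ of rational numbers satisfies the hypotheses of that theorem. First I would observe that $\mathcal{Q}$, written additively, is torsion-free: the only element of finite order in $(\mathbb{Q}, +)$ is $0$, since $n\cdot q = 0$ with $n \neq 0$ forces $q = 0$. Next I would confirm that $\mathcal{Q}$ is locally cyclic, meaning every finitely generated subgroup is cyclic. Given finitely many rationals $a_1/b_1, \ldots, a_k/b_k$, the subgroup they generate is contained in $\frac{1}{b}\mathbb{Z}$ where $b = b_1 b_2 \cdots b_k$, and every subgroup of the infinite cyclic group $\frac{1}{b}\mathbb{Z}$ is itself cyclic; hence the generated subgroup is cyclic.

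Having established that $\mathcal{Q}$ is a torsion-free locally cyclic group, I would simply invoke Theorem \ref{Thm:TFLC} directly with $G = \mathcal{Q}$ to conclude that any Schur ring $\S$ over $\mathcal{Q}$ is either the group ring $F[\mathcal{Q}]$ or the symmetric Schur ring $F[\mathcal{Q}]^\pm$. Since both of these are genuine Schur rings over $\mathcal{Q}$ (the group ring always is, and the symmetric ring is available because $\mathcal{Q}$ is abelian, as noted following Definition \ref{def:sring}), the classification is exactly these two rings and the corollary follows.

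I do not anticipate any substantive obstacle here: the corollary is essentially a sanity-check application of the main theorem of the section, and the only content beyond citing Theorem \ref{Thm:TFLC} is the standard fact that $(\mathbb{Q}, +)$ is torsion-free and locally cyclic. The one point worth stating cleanly is the local-cyclicity verification, since that is the defining property being used; the torsion-free condition is nearly trivial. If I wanted to be maximally terse, I could note that the paper has already recorded (at the start of Section \ref{sec:rational}) that torsion-free locally cyclic groups are precisely the subgroups of $(\mathbb{Q}, +)$, and $\mathcal{Q}$ is certainly such a subgroup (namely all of it), so the hypotheses of Theorem \ref{Thm:TFLC} are met automatically.
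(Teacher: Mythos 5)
Your proposal is correct and matches the paper's intent exactly: the paper states this corollary without proof as an immediate application of Theorem \ref{Thm:TFLC}, and your verification that $(\mathbb{Q},+)$ is torsion-free and locally cyclic is precisely the routine check being left to the reader. Nothing is missing.
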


\bibliographystyle{plain}
\bibliography{Srings}
\end{document}